\newtheorem*{rep@theorem}{\rep@title}
\newcommand{\newreptheorem}[2]{%
\newenvironment{rep#1}[1]{%
 \def\rep@title{#2 \ref{##1}}%
 \begin{rep@theorem}}%
 {\end{rep@theorem}}}
\definecolor{RedOrange}{cmyk}{ 0, 0.77, 0.87, 0}
\definecolor{RoyalPurple}{cmyk}{ 0.84, 0.53, 0, 0}
\definecolor{YellowGreen}{cmyk}{ 0.44, 0, 0.74, 0}
\definecolor{Fuchsia}{cmyk}{ 0.47, 0.91, 0, 0.08}
\definecolor{Blue}{cmyk}{ 0.84, 0.53, 0, 0}
\definecolor{BlueViolet}{cmyk}{ 0.84, 0.53, 0, 0}
\definecolor{Black}{cmyk}{ 0.75, 0.68, 0.67, 0.9}
\newcommand{\lf}{\lfloor}
\newcommand{\rf}{\rfloor}
\newcommand{\R}{\mathbb{R}}
\newcommand{\N}{\mathbb{N}}
\newcommand{\e}{\varepsilon}
\newcommand{\la}{\langle}
  \newcommand{\ra}{\rangle}
\newcommand{\E}{\mathbb{E}}
\newcommand{\Z}{\mathbb{Z}}
\renewcommand{\S}{\mathbb{S}}
\renewcommand{\P}{\mathbb{P}}
\renewcommand{\L}{\mathbb{L}}
\newcommand{\rmT}{\mathrm{T}}
\newcommand{\rmB}{\mathrm{B}}
\newcommand{\rmD}{\mathrm{D}}
\newcommand{\rmp}{\mathrm{p}}
\newcommand{\lin}{\left[\kern-0.15em\left[}
\newcommand{\rin} {\right]\kern-0.15em\right]}
\newcommand{\linf}{[\kern-0.15em [}
\newcommand{\rinf} {]\kern-0.15em ]}
\newcommand{\ilin}{\left]\kern-0.15em\left]}
\newcommand{\irin} {\right[\kern-0.15em\right[}
\def\ben#1{\begin{equation}#1\end{equation}}
\def\bean#1{\begin{eqnarray}#1\end{eqnarray}}
\def\al#1{\begin{align*}#1\end{align*}}
\def\aln#1{\begin{align}#1\end{align}}
\newcommand{\secno}[1]{\thesection.\arabic{#1}}
\newtheorem{lem}{Lemma}[section]
\newtheorem{prop}[lem]{Proposition}
\newtheorem{thm}[lem]{Theorem}
\definecolor{lilas}{RGB}{182, 102, 210}
\numberwithin{equation}{section}
\def\ben#1{\begin{equation}#1\end{equation}}
\def\bean#1{\begin{eqnarray}#1\end{eqnarray}}
\title[Upper tail LDP for a class of distributions in FPP]
{Upper tail large deviations for a class of distributions in First-passage percolation}
\date{\today}
\author{Shuta Nakajima} 
\address[Shuta Nakajima]
{Graduate School of Mathematics, University Nagoya.}
\email{njima@math.nagoya-u.ac.jp}
\keywords{Eden growth model, First-passage percolation, Large deviations.}
\subjclass[2010]{Primary 60K37; secondary 60K35; 82A51; 82D30}
\begin{document}

\maketitle

\begin{abstract}
For first passage percolation with identical and independent exponential distributions, called the Eden growth model, we study the upper tail large deviations for the first passage time $\rmT$. In this paper we show that for any $\xi>0$ and $x\neq 0$, $\P(\rmT(0,nx)>n(\mu+\xi))$ decays as $\exp{(-(2d\xi +o(1))n)}$ with a time constant $\mu$ and a dimension $d$. Moreover, we extend the result to stretched exponential distributions. On the contrary, we construct a continuous distribution with a finite exponential moment where the rate function does not exist. 
\end{abstract}
\section{Introduction}
\subsection{Introduction and Main results}
First-passage percolation (FPP) was first introduced by Hammersley and Welsh in 1965, as a dynamical version of the percolation model \cite{HW65}. Since then, it has been extensively studied both in mathematics and physics. There are several reasons that FPP is an attractive model in the fields. One reason is that FPP naturally defines a random metric space. Indeed, the objects of important in FPP, called the first passage time and the optimal path, correspond to a metric and a geodesic in a certain random metric space. Another is that FPP is expected to belong to the KPZ universality class. Moreover, it is widely believed that the boundary of a ball defined by the first passage time behaves like a KPZ equation \cite{KPZ86,KS91}. See \cite{ADH} for more detailed backgrounds.\\

In this paper, we consider the FPP on the lattice $\L^d=(\Z^d,\E^d)$ with $d\geq 2$. The model is defined as follows.  To each edge $e\in \E^d$, we assign a non-negative random variable $\tau_e$. We assume that the collection $\tau=(\tau_e)_{e\in\E^d}$ is identically and independent distributions and there exist $c_1,c_2,\alpha>0$ and $r\in(0,1]$ such that for sufficiently large $t>0$
  \ben{\label{cond:distr}
    \P(\tau_e=0)<\rmp_c(d)\text{ and }c_1\exp{(-\alpha t^r)}\leq \P(\tau_e>t)\leq c_2\exp{(-\alpha t^r)},
  }
  where $\rmp_c(d)$ stands for the critical probablity of $d$-dimensional percolation. Note that for exponential distributions, the models are specially called the Eden growth model \cite{Eden}. For the generalization of the condition \eqref{cond:distr}, see Section~\ref{remark:1}.\\

  A sequence $(x_i)_{i=1}^{l}$ is said to be a path if each successive pair is nearest neighbor, i.e. $|x_i-x_{i+1}|_1=1$ for any $i$. We note that a path is seen both as a set of vertices and a set of edges with some abuse of notation. Given a path $\gamma$, we define the passage time of $\gamma$ as
$$\rmT(\gamma)=\sum_{e\in\gamma}\tau_e.$$
For $x\in\R^d$, we set $\lf x\rf=(\lf x_1\rf,\cdots,\lf x_d\rf)$  where $\lf a\rf$ is the greatest integer less than or equal to $a$ for $a\in\R$. Given $x,y\in\R^d$, we define the {\em first passage time} between $x$ and $y$ as
$$\rmT(x,y)=\inf_{\gamma:\lf x\rf\to \lf y\rf}\rmT(\gamma),$$
where the infimum is taken over all finite paths $\gamma$ starting at $\lf x\rf$ and ending at $\lf y\rf$.\\

By Kingman's subadditive ergodic theorem \cite{King73}, if $\E \tau_e<\infty$, for any $x\in\R^d$, there exists an non-random constant $\mu(x)\ge 0$ (called the {\em time constant}) such that

\begin{align}\label{kingman}
  \mu(x)=\lim_{t\to\infty}t^{-1} \rmT(0,t x)=\lim_{t\to\infty}t^{-1} \E[\rmT(0,t x)]\hspace{4mm}a.s.
\end{align}
This convergence can be seen as a law of large numbers. The following result shows the corresponding upper large deviations.
\begin{thm}\label{thm:main1}
  For any $\xi>0$ and $x\in\R^d\backslash\{0\}$,
  \ben{
    \lim_{n\to\infty}\frac{1}{n^r}\log{\P(\rmT(0,nx)>n(\mu(x)+\xi))}=-2d\alpha \xi^r,\label{thm:maineq1}
  }  
  where $\alpha$ and $r$ are in \eqref{cond:distr}.
  \end{thm}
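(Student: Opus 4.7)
The plan is to prove matching lower and upper bounds on $n^{-r}\log\P(\rmT(0,nx)>n(\mu+\xi))$, both centered around the cost $2d\alpha\xi^r$ of forcing the $2d$ edges at one endpoint to be atypically large.

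\textbf{Lower bound.} I force a bottleneck at the origin. Fix $\epsilon>0$ and set
\[
A_\epsilon=\{\tau_{(0,v)}\ge n(\xi+2\epsilon)\ \text{for every }v\sim 0\}.
\]
By independence and the lower bound in~\eqref{cond:distr}, $\P(A_\epsilon)\ge c_1^{2d}\exp(-2d\alpha(\xi+2\epsilon)^r n^r)$ for $n$ large. Let $B_\epsilon$ be the event that for every $v\sim 0$, the restricted passage time $\rmT^{\setminus\{0\}}(v,\lf nx\rf)$ using only paths that avoid vertex $0$ is at least $n(\mu-\epsilon)$; because $d\ge 2$ the detour around $0$ costs only bounded local weights, so Kingman's theorem~\eqref{kingman} applied at the $2d$ neighbors gives $\P(B_\epsilon)\to 1$. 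The events $A_\epsilon$ and $B_\epsilon$ depend on disjoint edges and are thus independent, and on their intersection the decomposition $\rmT(0,nx)=\min_{v\sim 0}\{\tau_{(0,v)}+\rmT^{\setminus\{0\}}(v,\lf nx\rf)\}$ gives $\rmT(0,nx)\ge n(\mu+\xi+\epsilon)>n(\mu+\xi)$. Hence $\P(\rmT>n(\mu+\xi))\ge\tfrac12\P(A_\epsilon)$ for $n$ large, and letting $\epsilon\downarrow 0$ yields $\liminf_n n^{-r}\log\P\ge -2d\alpha\xi^r$.

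\textbf{Upper bound: setup.} The same decomposition is the key. Writing $U_v:=\rmT^{\setminus\{0\}}(v,\lf nx\rf)$, $t:=n(\mu+\xi)$ and $\bar F(s):=\P(\tau_e>s)$, and using that the $2d$ edges at $0$ are independent of $(U_v)_{v\sim 0}$,
\[
\P(\rmT>t)=\E\Bigl[\prod_{v\sim 0}\bar F\bigl((t-U_v)_+\bigr)\Bigr].
\]
For $\delta\in(0,\xi)$, on the typical event $\Omega_\delta=\{\max_{v\sim 0}U_v\le n(\mu+\delta)\}$ we have $(t-U_v)_+\ge n(\xi-\delta)$, so the upper bound in~\eqref{cond:distr} controls the product by $c_2^{2d}\exp(-2d\alpha(\xi-\delta)^r n^r)$. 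Bounding the product by $1$ on $\Omega_\delta^c$ yields
\[
\P(\rmT>t)\le c_2^{2d}\exp(-2d\alpha(\xi-\delta)^r n^r)+2d\,\P\bigl(U_v>n(\mu+\delta)\bigr).
\]

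\textbf{Main obstacle.} The difficulty lies entirely in the last term. Since $U_v$ is itself essentially a first-passage time to $\lf nx\rf$ with time constant $\mu(x)$, its upper tail is governed by the very rate function being proved, so a purely self-referential application of the theorem only gives $\P(U_v>n(\mu+\delta))\lesssim\exp(-2d\alpha\delta^r n^r)$; balancing this against the first term forces $\delta=\xi/2$ and delivers only rate $2d\alpha(\xi/2)^r=2d\alpha\xi^r/2^r$, short of the target by a factor of $2^r$. Closing this gap is the main technical content of the proof. The plan is to establish beforehand an auxiliary a priori bound that rules out alternative "cheap" mechanisms producing $\rmT>n(\mu+\xi)$—bulk bottlenecks or spatially spread collections of moderately large edges—by combining~\eqref{cond:distr} with (i) a Fuk--Nagaev / Bennett-type bound for sums of independent stretched-exponential variables along a deterministic lattice path of length $O(n|x|_1)$, giving coarse control on the upper tail at large deviation levels, and (ii) a multi-scale / coarse-graining argument that forces the LDP-cost bottleneck to concentrate near an endpoint. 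Feeding the resulting refined estimate into the displayed inequality and then letting $\delta\downarrow 0$ produces $\limsup_n n^{-r}\log\P\le-2d\alpha\xi^r$, matching the lower bound.
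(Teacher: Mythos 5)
Your lower bound is essentially the paper's argument (conditioning the $2d$ edges at $0$ to exceed $n(\xi+\e)$ and using independence plus the law of large numbers away from $0$), and it is correct. The upper bound, however, stops exactly at the point where the real work begins, and the sketched plan does not close the gap.

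The issue with your outline is twofold. First, the $2^r$ loss you identify is a symptom of treating the two endpoint contributions additively and then optimizing a single cutoff $\delta$; it is not an intrinsic obstacle. The paper avoids it by never isolating $\P(U_v>n(\mu+\delta))$. Instead it introduces $\Theta(M/K)$ disjoint, congruent slabs $\S_v$ connecting a neighborhood of $0$ to a neighborhood of $n\mathbf{e}_1$, and uses a Chow--Zhang type lemma (Lemma~\ref{lem:Zhang}) to show that the probability that \emph{every} slab has restricted passage time $\geq(\mu+\e)n$ decays at rate $cM/(3K)\,n^r$; choosing $M$ large makes this faster than the target rate $2d\alpha\xi^r n^r$, so this contribution is negligible rather than competing with the main term. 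On the complementary event there is a slab whose crossing time is at most $(\mu+\e)n$, so the triangle inequality forces $\rmT(0,v^{[1]}_n)+\rmT(v^{[2]}_n,n\mathbf{e}_1)\geq(\xi-\e)n$ for two points $v^{[1]}_n,v^{[2]}_n$ within distance $M$ of the two endpoints. Second, the decisive step is how this localized excess is priced. The paper takes $2d$ edge-disjoint paths $r_i^x$ from $0$ to $v^{[1]}_n$ and $2d$ edge-disjoint paths $r_i^y$ from $v^{[2]}_n$ to $n\mathbf{e}_1$, pairs them as $r_i^x\cup r_i^y$, and bounds
\[
\P\left(\rmT(0,v^{[1]}_n)+\rmT(v^{[2]}_n,n\mathbf{e}_1)\geq(\xi-\e)n\right)\leq\prod_{i=1}^{2d}\P\left(\sum_{e\in r_i^x\cup r_i^y}\tau_e\geq(\xi-\e)n\right),
\]
with each factor controlled by the Markov bound $\exp(-(1-\e)\alpha((\xi-\e)n)^r)$, which in turn hinges on $\bigl(\sum_i x_i\bigr)^r\leq\sum_i x_i^r$ for $r\leq 1$. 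This concavity is exactly what prevents the $2^r$ loss when the excess cost is split between the two endpoints, since the tail exponent for the \emph{combined} sum is the same as if all the cost sat on one edge. Your proposal gestures at ``Fuk--Nagaev bounds'' and ``multi-scale coarse-graining'' but never produces either the independent-slab lemma that makes the bulk negligible at a tunable rate, or the disjoint-path pairing plus subadditivity of $t\mapsto t^r$ that extracts the sharp constant $2d\alpha\xi^r$. As written the upper bound is incomplete.
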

Next, we construct a distribution where the rate function does not exist. Let $a_0=0$ and we define a sequence $a_n$ inductively as
$$a_{n+1}=2^{a_n}.$$
Let $\alpha_1<\alpha_2$ be positive constants. We now consider a non-negative distribution satisfying
\ben{\label{anomaly:distr}
  \P(\tau_e\in dx)=
  \begin{cases}
    c_3\exp{(-\alpha_2 x)}dx,&\text{ if $x\in[a_{2n},a_{2n+1})$ with some $n\in\Z_{\geq{}0}$}\\
      c_3\exp{(-\alpha_1 x)}dx,&\text{ if $x\in[a_{2n-1},a_{2n})$ with some $n\in\Z_{\geq{}0}$},
  \end{cases}
      }
    where $c_3>0$ is determined so that $\int_{0}^{\infty}\P(\tau_e\in dx)=1$. It is straightforward to check that
    $$\E[\exp{(-\rho\tau_e)}]<\infty\text{ for any $\rho<\alpha_1$}.$$
    \begin{thm}\label{thm:main3}
      Suppose \eqref{anomaly:distr}. Then for any $\xi>0$,
      \ben{\label{limsup}
        \limsup_{n\to\infty}\frac{1}{n}\log{\P(\rmT(0,n\mathbf{e}_1)>n(\mu(\mathbf{e}_1)+\xi))}= -2d\alpha_1 \xi,
      }
      \ben{\label{liminf}
        \liminf_{n\to\infty}\frac{1}{n}\log{\P(\rmT(0,n\mathbf{e}_1)>n(\mu(\mathbf{e}_1)+\xi))}= -2d\alpha_2 \xi.
      }
      In particular,
      \al{
      -\infty&<\liminf_{n\to\infty}\frac{1}{n}\log{\P(\rmT(0,n\mathbf{e}_1)>n(\mu(\mathbf{e}_1)+\xi))}\\
      &<\limsup_{n\to\infty}\frac{1}{n}\log{\P(\rmT(0,n\mathbf{e}_1)>n(\mu(\mathbf{e}_1)+\xi))}<0.
      }
      \end{thm}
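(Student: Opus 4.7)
The plan is to decouple the argument into two parts: first, a careful analysis of the tail $F(t) := \P(\tau_e > t)$ under \eqref{anomaly:distr}; and second, an adaptation of the isolation-based strategy from Theorem \ref{thm:main1} that relates $\P(\rmT(0, n\mathbf{e}_1) > n(\mu + \xi))$ to $F(n\xi)^{2d}$ up to subexponential slack.

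First I would compute $F(t)$ directly by integrating the piecewise density in \eqref{anomaly:distr} and establish the two uniform bounds
$$c\, e^{-\alpha_2 t} \le F(t) \le C\, e^{-\alpha_1 t} \quad \text{for all } t \ge 0,$$
together with the fact that both bounds are saturated, up to a constant, along explicit subsequences of $t$. Specifically, on the intervals $[a_{2k-1}, a_{2k})$ one finds $F(t) = \tfrac{c_3}{\alpha_1} e^{-\alpha_1 t}(1 + o(1))$, while on the initial portion of $[a_{2k}, a_{2k+1})$, say for $t \le \tfrac{\alpha_1}{2\alpha_2} a_{2k+1}$, one finds $F(t) = \tfrac{c_3}{\alpha_2} e^{-\alpha_2 t}(1 + o(1))$. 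The oscillation between the $\alpha_1$ and $\alpha_2$ rates is precisely what prevents a limiting rate function.

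Next I would revisit the proof of Theorem \ref{thm:main1}. Its lower bound is obtained by forcing all $2d$ edges incident to the origin to have passage time at least $n\xi(1 + o(1))$ and using concentration for $\rmT(\mathbf{e}_1, n\mathbf{e}_1)$ around $n\mu$, which yields
$$\P(\rmT(0, n\mathbf{e}_1) > n(\mu + \xi)) \ge F\bigl(n\xi(1 + o(1))\bigr)^{2d}\, e^{-o(n)}.$$
The matching upper bound gives $\P(\rmT(0, n\mathbf{e}_1) > n(\mu + \xi)) \le F\bigl(n\xi(1 - o(1))\bigr)^{2d}\, e^{o(n)}$. Neither direction requires the full two-sided tail of \eqref{cond:distr}: the lower bound uses only a lower estimate on $F$ at the scale $n\xi$, while the upper bound uses an upper estimate on $F$ at the scale $n\xi$ together with the uniform bound $F(t) \le C e^{-\alpha_1 t}$ to control configurations where no endpoint is isolated. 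All of these ingredients are supplied by Step~1.

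With this comparison in hand, the theorem follows by plugging in the behaviour of $F$. For \eqref{limsup}, the uniform bound $F(n\xi) \le C e^{-\alpha_1 n\xi}$ gives $\limsup \le -2d\alpha_1 \xi$, and choosing $n_k$ with $n_k\xi \in [a_{2k-1}, a_{2k})$ (possible because $\xi$ is fixed and the intervals $[a_{2k-1}, a_{2k})$ grow without bound) gives the matching $\limsup \ge -2d\alpha_1 \xi$. For \eqref{liminf}, the uniform bound $F(n\xi) \ge c e^{-\alpha_2 n\xi}$ gives $\liminf \ge -2d\alpha_2 \xi$, and choosing $n_k$ with $n_k\xi$ deep inside the initial fraction of $[a_{2k}, a_{2k+1})$ gives $\liminf \le -2d\alpha_2 \xi$. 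The main obstacle is making the sandwich $F(n\xi(1+o(1)))^{2d} \lesssim \P(\,\cdot\,) \lesssim F(n\xi(1-o(1)))^{2d}$ quantitative enough that the $o(n)$ slack cannot shift $n\xi$ across a regime boundary of the distribution; because the tower $a_{k+1} = 2^{a_k}$ grows super-exponentially, the intervals are very long in absolute scale and the slack is easily absorbed, but this does require checking that the error terms in the isolation argument are genuinely of order $e^{o(n)}$ at the probability scale rather than only after normalizing the logarithm by $n$.
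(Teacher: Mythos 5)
Your proposal follows essentially the same two-part strategy as the paper: a regime-by-regime analysis of the tail $F(t)=\P(\tau_e>t)$, followed by the endpoint-isolation argument from Theorem~\ref{thm:main1} that traps $\P(\rmT_n>n(\mu+\xi))$ between $F$-powers at scale $n\xi$. The uniform bounds $c\,e^{-\alpha_2 t}\le F(t)\le C\,e^{-\alpha_1 t}$ giving $-2d\alpha_2\xi\le\liminf\le\limsup\le -2d\alpha_1\xi$, and the subsequences landing $n\xi$ in the $\alpha_1$-regime (resp.\ deep in the $\alpha_2$-regime) for the two remaining inequalities, match the paper's choice of the scales $a_{2n-1}^2$ and $a_{2n}^2$.

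One step is stated more generously than it can be proved, and it is exactly the step the paper devotes a separate lemma to. You write the upper half of the sandwich as $\P(\rmT_n>n(\mu+\xi))\le F(n\xi(1-o(1)))^{2d}e^{o(n)}$, and describe it as needing only ``an upper estimate on $F$ at the scale $n\xi$.'' This is not quite right: the isolation argument reduces to bounding $\P\bigl(\sum_{i=1}^k\tau_{e_i}>(\xi-\e)n\bigr)$ for a sum of $O(M)$ i.i.d.\ edge weights along the $4d$ short paths near the endpoints, and a bound of the form $\P(\sum_i X_i>t)\lesssim F(t)$ does \emph{not} hold uniformly in $t$ for the distribution \eqref{anomaly:distr}. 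For instance, when $t$ sits just above $a_{2k}$, a few of the $X_i$ can each land in the preceding $\alpha_1$-regime $[a_{2k-1},a_{2k})$, and the resulting probability $\approx e^{-k'\alpha_1 a_{2k}}$ can easily beat $F(t)\approx e^{-\alpha_2 t}\approx e^{-\alpha_2 a_{2k}}$ when $\alpha_2$ is large relative to $\alpha_1$. The exponential Markov bound behind Lemma~\ref{exp:est} only uses the moment-generating function, which for this distribution converges only for $\beta<\alpha_1$, so out of the box it yields only the $\alpha_1$-rate. What rescues the $\alpha_2$-rate along a subsequence is the truncation in the paper's Lemma~\ref{anomaly:lem}: discard values $\ge a_{2n+1}$ (negligible because $a_{2n+1}=2^{a_{2n}}$), absorb values $<a_{2n}$ into a small additive error $ka_{2n}\ll t$, and then apply exponential Markov to the remaining $\alpha_2$-tailed truncated variables. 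You gesture at the need to check that errors are $e^{o(n)}$, but the sharper point is that the sum bound itself must be re-derived with this truncation; a subsequence choice alone does not make the naive $F(t)$ comparison go through.
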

\subsection{Related work}
Large deviation principle (LDP) is one of the major subjects in probability theory. The study of the large deviations in the first passage percolation was initiated by Kesten \cite{Kes86}. For the lower tail LDP, by using the usual subadditivity argument, he obtained that for $\xi>0$, the following limit exists and is negative:
\ben{\label{Kes: lower}
\lim_{n\to\infty}\frac{1}{n}\log{  \P(\rmT(0,n\mathbf{e}_1)<n(\mu(\mathbf{e}_1)-\xi))}.
  }
On the other hand, he showed that under the boundedness of the distrbution,
\aln{
  -\infty&<\liminf_{n\to\infty}\frac{1}{n^d}\log{  \P(\rmT(0,n\mathbf{e}_1)>n(\mu(\mathbf{e}_1)+\xi))}\nonumber\\
  &\leq \limsup_{n\to\infty}\frac{1}{n^d}\log{  \P(\rmT(0,n\mathbf{e}_1)>n(\mu(\mathbf{e}_1)+\xi))}<0. \label{Kes: upper}
  }
It is worth noting that the rate of upper LDP and lower LDP are different (See \cite{CZ03} for the heuristics of the difference). Although the existence of the rate function of upper tail LDP had been an open problem for many years, the authors in \cite{BGS} solved this recently under the boundedness of the distribution with some assumption of continuity. The assumption of boundedness is essential since the rate of \eqref{Kes: upper} may change for general distribution \cite{CGM09}. For example, for exponential distributions, it was proved that
\al{
  -\infty&<\liminf_{n\to\infty}\frac{1}{n}\log{  \P(\rmT(0,n\mathbf{e}_1)>n(\mu(\mathbf{e}_1)+\xi))}\\
  &\leq \limsup_{n\to\infty}\frac{1}{n}\log{  \P(\rmT(0,n\mathbf{e}_1)>n(\mu(\mathbf{e}_1)+\xi))}<0.
}
The different scalings appear due to different pictures of the upper LDP event \linebreak$\{\rmT(0,nx)>(\mu(x)+\xi)n\}$. Indeed, for bounded distributions, the upper LDP events are   affected by overall configurations. In contrast, for exponential distributions, the upper LDP events highly depend on the configurations around the starting point and the ending point. Hence, for general distibution, we need to take a distribution-dependent approach to study upper LDP in depth. In this paper, we consider exponential and stretched exponential distributions. In these cases, studying the neighborhood of the endpoints carefully, we can do a more detailed analysis, which enables us to get the exact value of the rate function.\\

Incidentally, in the frog models, it is proved that $-\log{\P(\rmT(0,nx)>(\mu_f(x)+\xi)n)}$, with a certain time constant $\mu_{f}(x)$, grows like (i)  $\sqrt{n}$ for $d=1$; (ii) $n/\log n$ for $d=2$; (iii) $n$ for $d\geq 3$ \cite{CKN}. Hence, it would be interesting if one identifies the rate functions of the frog models as in our results.

\subsection{Remark on the generalization of the condition \eqref{cond:distr}}\label{remark:1}
  For $r<1$, we can weaken the second condition of \eqref{cond:distr} as follows. Suppose that there exist slowly varying functions $c_1(t),c_2(t),{\rm b}_1(t),{\rm b}_2(t)$ and $0<r<1$ such that $\displaystyle\lim_{t\to\infty}\frac{{\rm b}_1(t)}{{\rm b}_2(t)}=1$ and for $t>0$,
  \ben{\label{cond-distr2}
  \P(\tau_e=0)<{\rm p}_c(d)\text{ and } c_1(t)\exp{(-{\rm b}_1(t)t^r)}\leq \P(\tau_e>t)\leq c_2(t)\exp{(-{\rm b}_2(t)t^r)},
  }
  where a function $f(t)$ is said to be slowly varying if for any $a>0$, $f(a)>0$ and
  $$\lim_{t\to\infty}\frac{f(at)}{f(t)}=1.$$
  Then, \eqref{thm:maineq1} is replaced by the following:
  \ben{\label{thm:main2}
    \lim_{n\to\infty}\frac{1}{{\rm b}_1(n)n^r}\log{\P(\rmT(0,nx)>n(\mu(x)+\xi))}=-2d \xi^r.
    }
  See Section~\ref{slow-vary} for their proofs.
\subsection{Notation and terminology}
This subsection collects some useful notations, terminologies and remarks.
\begin{itemize}
  \item Given nearest neighbor vertices $v,w\in\Z^d$, we write $\la v,w \ra$ for the edge connecting $v$ and $w$.
 \item Given two vertices $v,w\in\Z^d$ and a set $D\subset\Z^d$, we set the {\em restricted} first passage time as
$$\rmT_D(v,w)=\inf_{\gamma\subset D}\rmT(\gamma),$$
   where the infimum is taken over all paths $\gamma$ from $v$ to $w$ with $\gamma\subset D$. If such a path does not exist, we set it to be infinity instead.
 \item Similarly, given a set $\E\subset \E^d$, we define
   $$\rmT_{\E}(v,w)=\inf_{\gamma\subset \E}\rmT(\gamma).$$
 \item Given $K,M\in\N$, we define
   $$\rmB_{K,M}=3K\Z^{d-1}\cap [-M,M]^{d-1}.$$
 \item Given $v\in \rmB_{K,M}$ and $n\in\N$, we define a slab as
   $$\S_v(K,n)=\{(x_i)_{i=1}^d\in\Z^d:~0\leq x_1\leq n,~(x_2,\cdots,x_d)\in (v+[-K,K]^{d-1})\}.$$
 \item Given $k\in\N$ and $x\in\Z^d$, we define
   $$\rmD_k(x)=x+[-k,k]^d.$$
  \item Given $(x_i)_{i=1}^k\subset \R$ and randoom variables $(X_i)_{i=1}^k$ with $k\in\N$,
   \al{\P\left(\sum_{i=1}^k X_i\geq \sum_{i=1}^k x_i\right)&\leq \P\left(\exists i\in\{1,\cdots,k\}\text{ s.t. } X_i\geq  x_i\right)\\
   &\leq \sum_{i=1}^k\P(X_i\geq x_i).
 }
   We use this inequality throughout this paper without any comment.
\end{itemize}
\section{Proofs}
\subsection{Proof of Theorem~\ref{thm:main1}}
For the simplicity of notation, we only consider the case $x=\mathbf{e}_1$, though the same proof works for general $x$. We write $\rmT_n=\rmT(0,n\mathbf{e}_1)$ and \allowbreak$\mu=\mu(\mathbf{e}_1)$.\\

We start with the lower bound. We fix $\xi>0$ and take $\e\in(0,\xi)$ arbitrary. Let
$$\E_1=\{e\in\E^d:~0\in e\}\text{ and }\L_1=\{x\in\Z^d:~|x|_1=1\}.$$
Roughly speaking, the lower bound comes from the event $\{\forall e\in\E_1,~\tau_e\geq \xi n\}$ whose probability is approximately $\exp{(-2d\alpha \xi^r n^r)}$. Indeed, on this event, $\{\rmT_n>(\mu+\xi)n\}$ is likely to occur. We make the heuristics rigorous. If for any $x\in \L_1$,
$$\tau_{\la0,x\ra}>(\xi+\e)n\text{ and }\rmT_{\E^d\backslash \E_1}(x,n\mathbf{e}_1)>(\mu-\e)n,$$
 then since
$$\rmT_n=\inf_{x\in\L_1}(\tau_{\la0,x\ra}+\rmT(x,n\mathbf{e}_1))\geq \inf_{x\in\L_1}\tau_{\la0,x\ra}+\inf_{x\in\L_1}\rmT_{\E^d\backslash \E_1}(x,n\mathbf{e}_1), $$
we get $\rmT_n>(\mu+\xi)n$. Thus
\aln{
  \P(\rmT_n>(\mu+\xi)n)&\geq \P(\forall x\in \L_1,~\tau_{\la0,x\ra}>(\xi+\e)n,~\rmT_{\E^d\backslash \E_1}(x,n\mathbf{e}_1)>(\mu-\e)n)\nonumber\\
  &= \P(\forall x\in\L_1,~\tau_{\la0,x\ra}>(\xi+\e)n)\P(\forall x\in\L_1,~\rmT_{\E^d\backslash \E_1}(x,n\mathbf{e}_1)>(\mu-\e)n).\label{lower-est}
}
The first term can be bounded from below by  $c_1^{2d}\exp{(-2d\alpha (\xi+\e)^r n^r)}$, where $c_1$ is in \eqref{cond:distr}. On the other hand, for the second term, since $$\rmT_n\leq\max_{e\in\E_1}\tau_e+\min_{x\in\L_1}\rmT_{\E^d\backslash \E_1}(x,n\mathbf{e}_1),$$
we obtain
\al{
  &\quad \P(\forall x\in\L_1,~\rmT_{\E^d\backslash \E_1}(x,n\mathbf{e}_1)>(\mu-\e)n)\\
  &\geq \P\left(\forall e\in\E_1,~\tau_e<\frac{\e n}{2},~\rmT_n>\left(\mu-\frac{\e}{2}\right)n\right)\\
  &\geq \P\left(\rmT_n>\left(\mu-\frac{\e}{2}\right)n\right)-\P\left(\exists e\in\E_1,~\tau_e\geq \frac{\e n}{2}\right),
}
which converges to $1$ as $n\to\infty$. Therefore, for sufficiently large $n$, we have
\ben{\label{lower-main}
  \P(\rmT_n>(\mu+\xi)n)\geq \frac{c_1^{2d}}{2}\exp{(-2d\alpha (\xi+\e)^r n^r)}.
  }
Since $\e$ is arbitrary, letting $\e\to 0$ after $n\to\infty$, we get
$$\liminf_{n\to\infty}\frac{1}{n^r}\log{\P(\rmT_n>(\mu+\xi)n)}\geq -2d\alpha \xi^r.$$
Next, we move on to the upper bound. The following lemma is a variant of \cite[Lemma 3.1]{CZ03}. In fact, the case $r=1$ is proved there. We prove it in Appendix.
\begin{lem}\label{lem:Zhang}
 For any $\e>0$, there exist $K=K(\e)\in\N$ and a positive constant $c=c(\e,K)$ such that $n\geq K$,
  $$\P\left(\rmT_{[0,n]\times[-K,K]^{d-1}}\left(0,n\mathbf{e}_1\right)\geq (\mu+\e)n\right)\leq\exp{(-cn^r)}.$$
\end{lem}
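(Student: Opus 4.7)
My plan is to reduce the restricted passage time across the slab to a sum of i.i.d.\ block passage times whose mean-per-length is close to $\mu$, and then apply a concentration inequality suited to stretched-exponential summands.

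For each fixed $K\in\N$, subadditivity (across concatenations along $\mathbf{e}_1$) yields a slab time constant
$$\mu_K := \lim_{L\to\infty}\frac{1}{L}\,\E\bigl[\rmT_{[0,L]\times[-K,K]^{d-1}}(0,L\mathbf{e}_1)\bigr],$$
and it is classical that $\mu_K\downarrow\mu$ as $K\to\infty$: any near-optimal path for $\rmT(0,L\mathbf{e}_1)$ has sublinear transversal fluctuation, hence stays inside $[-K,K]^{d-1}$ with probability close to $1$ for $K$ large, which forces $\mu_K$ arbitrarily close to $\mu=\lim\E\rmT(0,L\mathbf{e}_1)/L$. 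Given $\e>0$, I would first choose $K=K(\e)$ with $\mu_K<\mu+\e/4$, and then $L=L(\e,K)$ large enough that $\E[\rmT_{[0,L]\times[-K,K]^{d-1}}(0,L\mathbf{e}_1)]\leq L(\mu+\e/2)$.

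Set $m=\lfloor n/L\rfloor$ and define, for $i=0,\ldots,m-1$,
$$X_i := \rmT_{[iL,(i+1)L]\times[-K,K]^{d-1}}\bigl(iL\mathbf{e}_1,(i+1)L\mathbf{e}_1\bigr).$$
After a routine adjustment (e.g.\ shrinking each box by one edge in the $\mathbf{e}_1$-direction and paying a single extra $\tau_e$ per interface) the $X_i$ depend on disjoint edge sets and are i.i.d. Concatenation yields
$$\rmT_{[0,n]\times[-K,K]^{d-1}}(0,n\mathbf{e}_1)\leq \sum_{i=0}^{m-1}X_i+R_n,$$
where $R_n$ accounts for the leftover strip of length at most $L$ and the interfacial edges; it is dominated by a sum of $O(L+n/L)$ edge weights and so satisfies $\P(R_n>\e n/4)\leq \exp(-c'n^r)$ by a union bound using \eqref{cond:distr}. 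It therefore suffices to show
$$\P\Bigl(\sum_{i=0}^{m-1}(X_i-\E X_i)\geq n\e/4\Bigr)\leq \exp(-c n^r).$$

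Each $X_i$ is bounded above by the sum of $\tau_e$ along a fixed straight path of length $L$ in its box, so \eqref{cond:distr} gives the tail estimate $\P(X_i>t)\leq L\,c_2\exp(-\alpha(t/L)^r)$ for $t$ large. To bound the sum I would truncate at the threshold $t_n=\beta n$ for a small $\beta=\beta(\e,L)$: writing $\tilde X_i=X_i\wedge t_n$, the rare event $\{\max_i X_i>t_n\}$ has probability at most $m\,L\,c_2\exp(-\alpha(\beta n/L)^r)$, the truncation bias $\E X_i-\E\tilde X_i=\int_{t_n}^\infty\P(X_i>s)\,ds$ is negligible, and Bernstein's inequality applied to the centered bounded variables $\tilde X_i-\E\tilde X_i$ yields a subgaussian-in-$n$ bound, dominated by $\exp(-cn^r)$. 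Thus the one-big-jump contribution sets the rate.

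The main obstacle is the subexponential range $r<1$: there $\E e^{\theta X_i}=\infty$ for every $\theta>0$, so the direct Cram\'er/Chernoff argument that handles $r=1$ in \cite[Lemma~3.1]{CZ03} breaks down. The truncation level $t_n=\beta n$ is chosen precisely so that the one-big-jump probability decays at the rate $\exp(-cn^r)$ demanded by the lemma, while the concentration of the bounded truncated sum is never the binding constraint.
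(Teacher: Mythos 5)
Your overall strategy—cut the slab into length-$L$ blocks, show the block passage times have mean-per-length close to $\mu$, dominate each block time by a sum of edge weights along a straight path, and then control the deviation of the i.i.d.\ block sum—is the same skeleton the paper uses (the paper takes blocks of fixed length $K$ inside wider boxes $[-M,M]^d$, which gives $M$-dependent rather than independent increments and is then handled by splitting into $M$ subsequences; your ``skip one boundary edge'' variant is a legitimate alternative to obtain genuine independence). The reduction to $\mu_K\downarrow\mu$ is also fine in spirit, although invoking ``sublinear transversal fluctuations'' is heavier machinery than needed: the paper simply uses that $\rmT_{[-M,M]^d}(0,K\mathbf{e}_1)\to\rmT(0,K\mathbf{e}_1)$ a.s.\ as $M\to\infty$ together with \eqref{kingman}.

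The genuine gap is in the concentration step. With truncation level $t_n=\beta n$, Bernstein's inequality applied to $\sum_{i=0}^{m-1}(\tilde X_i-\E\tilde X_i)$ with $m\asymp n/L$, bound $B\asymp\beta n$, total variance $V\asymp n$, and deviation $t=\e n/4$ gives an exponent of order
$-\,t^2\big/(V+Bt)\;\asymp\; -\,n^2\big/(n+\beta n^2)\;\asymp\;-\,1$,
a constant, not $-cn^r$. The claim that this is ``a subgaussian-in-$n$ bound, dominated by $\exp(-cn^r)$'' is incorrect: for $n$ large one is in the Poisson (not Gaussian) regime of Bernstein because $B\propto n$. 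Nor can one rescue the argument by lowering $t_n$: to make the union-bound term $m\,\P(X_1>t_n)\lesssim n\exp(-\alpha(t_n/L)^r)$ decay like $\exp(-cn^r)$ you are forced to take $t_n\gtrsim n$, while to make Bernstein decay like $\exp(-cn^r)$ you need $t_n\lesssim n^{1-r}$; for $r\in(0,1)$ these are incompatible. This is exactly the failure of the Cram\'er/Chernoff machinery in the stretched-exponential regime. The paper avoids it by truncating at a \emph{constant} level $(\mu+\e/2)K$, keeping the truncated part trivially controlled, and bounding the excess $\sum_i T^{[Mi]}\mathbf{1}_{\{T^{[Mi]}\ge(\mu+\e/2)K\}}$ (decomposed edge-by-edge into variables with small mean and stretched-exponential tails) via the dedicated concentration inequality \cite[{(4.2)}]{GRR}. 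To repair your proof you would need to replace Bernstein by such a stretched-exponential (Nagaev/GRR-type) bound for the sum of the large excesses; Bernstein alone cannot deliver $\exp(-cn^r)$ here.
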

Let $\e\in(0,\xi)$ and we take such $K\in\N$ and $c>0$. Let $M=M(\xi,\e,c,K)\in 3K\N$ so that
\ben{\label{choice:M} \frac{cM}{K}>12d\alpha\xi^r.}
For simplicity, we write $\S_v=\S_v(K,n)$. Given $v\in\Z^{d-1}$, we write
$$v^{[1]}_n=(0,v)\text{ and }v^{[2]}_n=(n,v).$$
Note that $v^{[1]}_n\in \rmD_M(0)$ and $v^{[2]}_n\in \rmD_M(n\mathbf{e}_1)$ for $v\in \rmB_{K,M}$.  For $v\neq w\in \rmB_{K,M}$, since $\S_v$ and $\S_w$ are disjoint, $\rmT_{\S_v}\left(v^{[1]}_n,v^{[2]}_n\right)$ and $\rmT_{\S_w}\left(w^{[1]}_n,w^{[2]}_n\right)$ are independent. Moreover, for $v\in \rmB_{K,M}$, $\S_v$ is congruent with $[0,n]\times[-K,K]^{d-1}$. Before going into the proof, we briefly explain the heuristics of the proof. By Lemma~\ref{lem:Zhang}, with high probability, we can find $v\in \rmB_{K,M}$ such that
$$\rmT_{\S_v}\left(v^{[1]}_n,v^{[2]}_n\right)\leq (\mu+\e)n.$$
By the triangular inequality,
\al{
  \rmT_n&\leq \rmT(0,v^{[1]}_n)+\rmT_{\S_v}\left(v^{[1]}_n,v^{[2]}_n\right)+\rmT\left(v^{[2]}_n,n\mathbf{e}_1\right)\\
  &\leq \rmT(0,v^{[1]}_n)+\rmT\left(v^{[2]}_n,n\mathbf{e}_1\right)+(\mu+\e)n.
  }
Thus, $\rmT_n> (\mu+\xi)n$ implies $\rmT\left(0,v^{[1]}_n\right)+\rmT\left(v^{[2]}_n,n\mathbf{e}_1\right)\geq (\xi-\e)n$. In order to estimate the latter event, we appeal to the large deviations of the sum of independent random variables (See Lemma~\ref{exp:est} below). Let us make the above heuristics rigorous.  By using Lemma~\ref{lem:Zhang} and \eqref{choice:M}, since $\sharp \rmB_{K,M}\geq M/3K$, for $n>K$,
  \al{
\P\left(\forall v\in \rmB_{K,M},~\rmT_{\S_v}\left(v^{[1]}_n,v^{[2]}_n\right)\geq (\mu+\e)n\right)  &\leq \exp{\left(-cn^r\sharp \rmB_{K,M}\right)}\\
  &\leq \exp{(-4d\alpha \xi^r n^r)}.
}
Thus,
\al{
  &\quad \P(\rmT_n>(\mu+\xi)n)\\
  &\leq \P\left(\rmT_n>(\mu+\xi)n,~\exists v\in \rmB_{K,M}\text{ s.t. }\rmT_{\S_v}\left(v^{[1]}_n,v^{[2]}_n\right)<(\mu+\e)n\right)\\
  &\hspace{8mm}+\P\left(\forall v\in \rmB_{K,M}\text,~\rmT_{\S_v}\left(v^{[1]}_n,v^{[2]}_n\right)\geq (\mu+\e)n\right)\\
  &\leq \P\left(\rmT_n>(\mu+\xi)n,~\exists v\in \rmB_{K,M}\text{ s.t. }\rmT_{\S_v}\left(v^{[1]}_n,v^{[2]}_n\right)<(\mu+\e)n\right)+\exp{(-4d\alpha \xi^r n^r)},
  }
  where the second term is negligible.
\begin{prop}
  Suppose that $\rmT_n>(\mu+\xi)n$ and there exists $v\in \rmB_{K,M}$\text{ such that }$$\rmT_{\S_v}\left(v^{[1]}_n,v^{[2]}_n\right)<(\mu+\e)n.$$ Then there exist $x\in \rmD_M(0)$ and $y\in \rmD_M(n\mathbf{e}_1)$ such that
  $$\rmT(0,x)+\rmT(y,n\mathbf{e}_1)\geq (\xi-\e)n.$$
\end{prop}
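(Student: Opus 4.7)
The proposition simply formalises the triangle-inequality heuristic sketched in the paragraph preceding it, so my plan is to choose the candidate endpoints explicitly from the assumed $v$ and check the three ingredients.

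The natural choice is $x = v^{[1]}_n = (0,v)$ and $y = v^{[2]}_n = (n,v)$. The first verification is that these lie in the prescribed boxes: since $v \in \rmB_{K,M} \subset [-M,M]^{d-1}$, the point $(0,v)$ lies in $\{0\}\times [-M,M]^{d-1} \subset \rmD_M(0)$, and analogously $(n,v) \in \{n\}\times [-M,M]^{d-1} \subset \rmD_M(n\mathbf{e}_1)$. This step is purely bookkeeping on the definition of $\rmD_M$.

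The main step is the triangle inequality for the (unrestricted) passage time, which gives
\[
 \rmT_n \;=\; \rmT(0,n\mathbf{e}_1) \;\le\; \rmT(0,v^{[1]}_n) \;+\; \rmT(v^{[1]}_n,v^{[2]}_n) \;+\; \rmT(v^{[2]}_n,n\mathbf{e}_1).
\]
Because any path inside $\S_v$ is in particular a path on $\E^d$, the restricted time dominates the unrestricted one, so $\rmT(v^{[1]}_n,v^{[2]}_n) \le \rmT_{\S_v}(v^{[1]}_n,v^{[2]}_n) < (\mu+\e)n$ by hypothesis.

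Combining these two inequalities with the assumption $\rmT_n > (\mu+\xi)n$ yields
\[
 (\mu+\xi)n \;<\; \rmT(0,v^{[1]}_n) + \rmT(v^{[2]}_n,n\mathbf{e}_1) + (\mu+\e)n,
\]
i.e.\ $\rmT(0,x) + \rmT(y,n\mathbf{e}_1) > (\xi-\e)n$, which is the claim. There is no genuine obstacle here; the only tiny subtlety is remembering that the proposition uses the unrestricted $\rmT$, so one must cite the monotonicity $\rmT \le \rmT_{\S_v}$ before chaining the triangle inequality with the hypothesis.
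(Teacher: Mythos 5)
Your proof is correct and follows essentially the same route as the paper: choose $x=v^{[1]}_n$, $y=v^{[2]}_n$, apply the triangle inequality, and use the hypothesis on $\rmT_{\S_v}$. You merely make explicit the monotonicity step $\rmT \le \rmT_{\S_v}$ that the paper folds into one chain of inequalities.
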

  \begin{proof}
    Let $v\in \rmB_{K,M}$ be such that $\rmT_{\S_v}\left(v^{[1]}_n,v^{[2]}_n\right)<(\mu+\e)n$. 
     By the triangular inequality,
    \al{
      (\mu+\xi)n< \rmT_n
      &\leq \rmT(0,v^{[1]}_n)+\rmT(v^{[2]}_n,n\mathbf{e}_1)+\rmT_{\S_v}\left(v^{[1]}_n,v^{[2]}_n\right)\\
      &< \rmT(0,v^{[1]}_n)+\rmT(v^{[2]}_n,n\mathbf{e}_1)+(\mu+\e)n.
      }
   Thus,
    $$\rmT(0,v^{[1]}_n)+\rmT(v^{[2]}_n,n\mathbf{e}_1)\geq (\xi-\e)n,$$ and $x=v^{[1]}_n$ and $y=v^{[2]}_n$ are the desired objects.
  \end{proof}
  Using the proposition above,
  \aln{
    &\quad \P\left(\rmT_n>(\mu+\xi)n,~\exists v\in \rmB_{K,M}\text{ s.t. }\rmT_{\S_v}\left(v^{[1]}_n,v^{[2]}_n\right)<(\mu+\e)n\right)\nonumber\\
    &\leq \P(\exists{}x\in \rmD_M(0),~\exists y\in \rmD_M(n\mathbf{e}_1)\text{ s.t. }\rmT(0,x)+\rmT(y,n\mathbf{e}_1)\geq (\xi-\e)n)\nonumber\\
    &\leq \sum_{x\in \rmD_M(0)}\sum_{y\in \rmD_M(n\mathbf{e}_1)}\P(\rmT(0,x)+\rmT(y,n\mathbf{e}_1)\geq (\xi-\e)n).\label{comp3}
}
  To estimate the inside of the summation, for $n>8M$, we consider $4d$ disjoint paths $\{r^x_i\}^{2d}_{i=1}\subset \rmD_{2M}(0)$ from $0$ to $x$ and $\{r^y_i\}^{2d}_{i=1}\subset \rmD_{2M}(n\mathbf{e}_1)$ from $y$ to $n\mathbf{e}_1$ so that
  $$\max\{\sharp r^{z}_i:~i\in\{1,\cdots,2d\},~z\in\{x,y\}\}\leq 4dM,$$
  where $\sharp r$ is the number of edges in a path $r$ as in \cite[p 135]{Kes86}. Then,
\al{
  \P(\rmT(0,x)+\rmT(y,n\mathbf{e}_1)\geq (\xi-\e)n) &\leq \P(\forall i\in\{1,\cdots,2d\},~\rmT(r_i^x)+\rmT(r^y_i)\geq (\xi-\e)n)\nonumber\\
  &= \prod_{i=1}^{2d}\P\left(\sum_{e\in r_i^x\cup r_i^y}\tau_e \geq (\xi-\e)n\right)\nonumber\\
  &\leq \exp{(-2d(1-\e)\alpha ((\xi-\e) n)^r )},
}
where we have used Lemma~\ref{exp:est} below in the last line.
\begin{lem}\label{exp:est}
  Let $(X_i)_{i=1}^k$ be identical and independent distrbutions satisfying \eqref{cond:distr}. Then for any $c>0$ there exists $n_0=n_0(k,c)$ such that for any $n\geq n_0$,
  $$\P\left(\sum_{i=1}^k X_i>n\right)\leq  \exp{(-(1-c)\alpha n^r)}.$$
\end{lem}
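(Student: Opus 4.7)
The plan is to combine the subadditivity of the map $x \mapsto x^r$ on $[0,\infty)$, valid for $r \in (0,1]$, with a Chernoff-type bound based on the assumed stretched-exponential upper tail in \eqref{cond:distr}. The crucial elementary observation is that for non-negative reals $y_1,\ldots,y_k$ and any $r \in (0,1]$, concavity of $x \mapsto x^r$ together with $0^r = 0$ yields $\left(\sum_{i} y_i\right)^r \leq \sum_i y_i^r$. In particular, the event $\{\sum_i X_i > n\}$ is contained in $\{\sum_i X_i^r > n^r\}$, which converts the problem of controlling a sum against a linear threshold into that of controlling a sum of variables with genuinely exponential tails (in the variable $X_i^r$).

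Given this reduction, I would proceed as follows. Fix $c>0$ and set $\theta = (1-c/2)\alpha < \alpha$. By the observation above, independence of the $X_i$'s, and Markov's inequality applied to $\exp(\theta\,\cdot)$,
$$\P\left(\sum_{i=1}^k X_i > n\right) \leq \P\left(\sum_{i=1}^k X_i^r > n^r\right) \leq e^{-\theta n^r}\prod_{i=1}^k \E\left[e^{\theta X_i^r}\right].$$
The key auxiliary step is to verify $C := \E[e^{\theta X_1^r}] < \infty$. Using the upper-tail bound $\P(X_1 > t) \leq c_2 e^{-\alpha t^r}$ from \eqref{cond:distr} and the substitution $s = t^r$,
$$\E\left[e^{\theta X_1^r}\right] = 1 + \int_0^\infty \theta e^{\theta s}\,\P(X_1^r > s)\,ds \leq 1 + c_2\theta \int_0^\infty e^{-(c/2)\alpha s}\,ds < \infty.$$

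Combining these two steps yields $\P(\sum_i X_i > n) \leq C^k \exp(-(1-c/2)\alpha n^r)$. For all $n$ larger than some $n_0 = n_0(k,c)$, the constant factor $C^k$ is dominated by $\exp((c/2)\alpha n^r)$, and we obtain the desired bound $\exp(-(1-c)\alpha n^r)$. I do not expect any serious obstacle here; the proof is essentially a two-line Chernoff argument once the subadditivity inequality has been pointed out. The only points that merit care are the finiteness of $\E[e^{\theta X_1^r}]$ for every $\theta < \alpha$, which relies on the quantitative upper tail in \eqref{cond:distr}, and the indispensability of the hypothesis $r \in (0,1]$ in the subadditivity step, since for $r>1$ the map $x^r$ becomes superadditive and the reduction fails entirely.
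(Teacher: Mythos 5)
Your proof is correct and follows essentially the same route as the paper: both reduce via subadditivity of $x\mapsto x^r$ to the event $\{\sum_i X_i^r > n^r\}$, then apply the exponential Markov (Chernoff) inequality at rate $\theta=(1-c/2)\alpha$, using $\E[e^{\theta X_1^r}]<\infty$ for $\theta<\alpha$. The only difference is that you spell out the verification of this exponential moment bound, which the paper simply asserts.
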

\begin{proof}
  Since $\E \exp{(\beta X_1^r)}<\infty$ for $\beta<\alpha$ and
  \ben{\label{concave}
    \left(\displaystyle\sum_{i=1}^k x_i\right)^r\leq \displaystyle\sum_{i=1}^k x_i^r\text{ for $x_i\geq 0$},
    }by the exponential Markov inequatliy, for sufficiently large $n$,
  \al{
    \P\left(\sum_{i=1}^k X_i>n\right)&\leq \exp{(-(1-c/2)\alpha n^r)}(\E \exp{((1-c/2) \alpha X_1^r)})^k\\
    &\leq \exp{(-(1-c)\alpha n^r)}.
    }
\end{proof}
Therefore, \eqref{comp3} can be bounded from above by
\al{
  &\quad \sum_{x\in \rmD_M(0)}\sum_{y\in \rmD_M(n\mathbf{e}_1)}\exp{(-2d(1-\e)\alpha((\xi-\e) n)^r)}\\
  &\leq (4M)^{2d}\exp{(-2d(1-\e)\alpha((\xi-\e) n)^r)}\\
  &\leq \exp{(-(2d\alpha\xi^r +o_\e(1)) n^r)},
}
where $o_\e(1)$ is a positive constant depending on $\e$, which converges to $0$ as $\e\to 0$ after $n\to\infty$. Since $\e$ is arbitrary, letting $\e\to 0$ after $n\to\infty$, we get
$$\limsup_{n\to\infty}\frac{1}{n^r}\log{\P(\rmT_n>(\mu+\xi)n)}\leq -2d\alpha \xi^r.$$
\subsection{Proof of Theorem~\ref{thm:main3}}
 By \eqref{anomaly:distr}, it is straightforward to check that there exist $c_4,c_5>0$ such that for sufficiently large $t$,
$$c_4\exp{(-\alpha_2t)}\leq \P(\tau_e>t)\leq c_5\exp{(-\alpha_1t)}.$$
Hence, the same argument as in Theorem~\ref{thm:main1} shows
\bean{\label{anomaly:cons}
  -2d\alpha_2\xi&\leq&\liminf_{n\to\infty}\frac{1}{n}\log{\P(\rmT(0,n\mathbf{e}_1)>n(\mu(\mathbf{e}_1)+\xi))}\\
  &\leq& \limsup_{n\to\infty}\frac{1}{n}\log{\P(\rmT(0,n\mathbf{e}_1)>n(\mu(\mathbf{e}_1)+\xi))}\leq -2d\alpha_1\xi.\nonumber
  }
We start with proving \eqref{limsup}. We consider $\P(\rmT_{a_{2n-1}^2}>(\mu+\xi)a_{2n-1}^2)$. Then, for sufficiently large $n$ such that $\xi\ll a_{2n-1}$, the first term of \eqref{lower-est} turns out to be
\al{
  \P(\forall e\in \E_1,~\tau_e>(\xi+\e)a_{2n-1}^2)&\geq \left(\int_{(\xi+\e)a_{2n-1}^2}^{a_{2n}}c_3\exp{(-\alpha_1 x)}dx\right)^{2d}\\
  &\geq \left(\int_{(\xi+\e)a_{2n-1}^2}^{(\xi+\e)a_{2n-1}^2+1}c_3\exp{(-\alpha_1 x)}dx\right)^{2d}\\
  &\geq \left(c_3\exp{(-\alpha_1 ((\xi+\e)a_{2n-1}^2+1))}\right)^{2d}\\
  &= c_3^{2d}\exp{(-2d\alpha_1 ((\xi+\e)a_{2n-1}^2+1))}.
}
Since the second term of \eqref{lower-est} converges to $1$ as before, letting $\e\to 0$ finally, we have
\al{
  \limsup_{n\to\infty}\frac{1}{n}\log{\P(\rmT_{n}>(\mu+\xi)n)}&\geq \limsup_{n\to\infty} \frac{1}{a_{2n-1}^2}\log{\P(\rmT_{a_{2n-1}^2}>(\mu+\xi)a_{2n-1}^2)}\\
  &\geq -2d\alpha_1 \xi. 
}
To prove \eqref{liminf}, we consider $\P(\rmT_{a_{2n}^2}>(\mu+\xi)a_{2n}^2)$. Since $\E[\exp{(-\rho \tau_e)}]<\infty$ for $\rho<\alpha_1$, \allowbreak \cite[Lemma 3.1]{CZ03} proves Lemma~\ref{lem:Zhang} with $r=1$. Lemma~\ref{exp:est} is replaced by the following.
\begin{lem}\label{anomaly:lem}
  Let $(X_i)_{i=1}^k$ be identical and independent distrbutions satisfying \eqref{anomaly:distr}. Then for any $c>0$ there exists $n_0=n_0(k,c)$ such that for any $n\geq n_0$,
  $$\P\left(\sum_{i=1}^k X_i> (\xi-\e)a_{2n}^2 \right)\leq  \exp{(-(1-c)\alpha_2(\xi-\e) a_{2n}^2)}.$$
\end{lem}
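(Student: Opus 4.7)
The plan is to adapt the exponential-Markov argument of Lemma~\ref{exp:est} by combining Chernoff's inequality with a truncation. The difficulty is that $\E\exp(\beta X_1)=\infty$ for $\beta\geq\alpha_1$, so a direct Chernoff bound yields only the rate $\alpha_1<\alpha_2$. To access the larger rate $\alpha_2$, the contribution of a single very large summand will be handled separately from the bulk.

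Fix $c>0$ and write $t_n=(\xi-\e)a_{2n}^2$. For large $n$, $t_n$ lies in the $\alpha_2$-interval $[a_{2n},a_{2n+1})$, because the tower-like growth forces $a_{2n+1}=2^{a_{2n}}\gg a_{2n}^2$; this yields the single-variable tail bound $\P(X_1>s)\leq C\exp(-\alpha_2 s)$ for $s\in[a_{2n},t_n]$, the higher-interval contributions $\int_{a_{2n+1}}^{\infty}$ being negligible. Set $T_n=(1-\delta)t_n$ for a small $\delta=\delta(c)>0$. The event $\{\sum_{i=1}^kX_i>t_n\}$ splits as $\{\exists i,\,X_i>T_n\}\cup\{\tilde S_k>t_n\}$ with $\tilde X_i:=X_i\wedge T_n$. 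A union bound controls the first piece: $k\P(X_1>T_n)\leq Ck\exp(-(1-\delta)\alpha_2 t_n)$, which is at most $\exp(-(1-c)\alpha_2 t_n)$ once $\delta$ is small and $n$ is large.

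For the second piece I will use exponential Markov with $\beta=(1-c/4)\alpha_2$:
\[
\P(\tilde S_k>t_n)\leq\exp(-\beta t_n)\bigl(\E\exp(\beta\tilde X_1)\bigr)^k.
\]
The key computation is the moment generating function $\E\exp(\beta\tilde X_1)$. Contributions from the $\alpha_2$-intervals are uniformly bounded since $\beta-\alpha_2=-c\alpha_2/4<0$, and the boundary term $\exp(\beta T_n)\P(X_1>T_n)$ is $o(1)$. The delicate contribution comes from the $\alpha_1$-intervals $[a_{2m-1},a_{2m})$ with $m\leq n$: there the integrand $\exp((\beta-\alpha_1)x)$ grows since $\beta>\alpha_1$, and the $m$-th interval contributes of order $\exp((\beta-\alpha_1)a_{2m})$. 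Here the tower growth saves the day --- because $a_{2m}\ll a_{2m+2}$, the sum collapses to $\sum_{m=1}^{n}\exp((\beta-\alpha_1)a_{2m})\leq 2\exp((\beta-\alpha_1)a_{2n})$, whence $\E\exp(\beta\tilde X_1)\leq C\exp((\beta-\alpha_1)a_{2n})$.

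Plugging into Chernoff gives $\P(\tilde S_k>t_n)\leq C^k\exp(-\beta t_n+k(\beta-\alpha_1)a_{2n})$. Since $t_n\asymp a_{2n}^2$ while the correction is only linear in $a_{2n}$, it is absorbed for large $n$, producing $\P(\tilde S_k>t_n)\leq\exp(-(1-c)\alpha_2 t_n)$; combined with the single-variable estimate this closes the argument. The main obstacle is precisely this MGF estimate: if $(a_m)$ grew only polynomially, $\exp((\beta-\alpha_1)a_{2n})$ would not be negligible compared to $\exp(c\alpha_2 t_n)$, and the rate $\alpha_2$ would be out of reach. The tower definition $a_{m+1}=2^{a_m}$ is exactly what makes the truncation strategy succeed.
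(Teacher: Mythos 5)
Your argument is correct and does work, but you truncate in a different place than the paper, and that changes the character of the key estimate. The paper's proof isolates the \emph{window} $[a_{2n},a_{2n+1})$: it splits off the event $\{\exists i,\ X_i\geq a_{2n+1}\}$ (negligible by the tower growth), and then observes that the remaining part below $a_{2n}$ contributes at most $k\,a_{2n}$ deterministically. What is left to Chernoff is the windowed variable $X_i\mathbf{1}_{\{a_{2n}\leq X_i<a_{2n+1}\}}$, whose density is purely $c_3e^{-\alpha_2 x}$, so its MGF at any $\beta<\alpha_2$ is \emph{uniformly bounded in $n$}. Your version truncates only from above, at $T_n=(1-\delta)t_n$ which sits strictly between $a_{2n}$ and $a_{2n+1}$. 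The price is that the truncated variable $X_i\wedge T_n$ passes through all of the lower $\alpha_1$-intervals $[a_{2m-1},a_{2m})$, $m\leq n$, so its MGF at $\beta\in(\alpha_1,\alpha_2)$ is \emph{not} uniformly bounded; it grows like $\exp((\beta-\alpha_1)a_{2n})$. You then observe, correctly, that this is linear in $a_{2n}$ and hence swamped by the quadratic threshold $t_n\asymp a_{2n}^2$, and separately that the single-variable tail at level $T_n$ has the $\alpha_2$ rate because $T_n$ lies in an $\alpha_2$-interval. Both routes rely on the same structural input --- that $a_{2n+1}=2^{a_{2n}}$ makes the tail beyond the current window negligible, and that $t_n\asymp a_{2n}^2$ dwarfs $a_{2n}$ --- but the paper's double truncation keeps the Chernoff step elementary, whereas your single truncation trades that for a slightly more intricate MGF computation. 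One small cleanup worth noting: the claim $\sum_{m\leq n}\exp((\beta-\alpha_1)a_{2m})\leq 2\exp((\beta-\alpha_1)a_{2n})$ is not needed in that sharp form; the crude bound $n\exp((\beta-\alpha_1)a_{2n})$ already suffices since the polynomial prefactor is absorbed in the final exponent. Also, the case $\beta=(1-c/4)\alpha_2\leq\alpha_1$ should be acknowledged (it makes the MGF uniformly bounded and the argument trivial), but this is cosmetic.
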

\begin{proof}
  By the union bound,
  \al{
    &\quad \P\left(\sum_{i=1}^k X_i> (\xi-\e)a_{2n}^2 \right)\\
    &\leq \P\left(\sum_{i=1}^k X_i\mathbf{1}_{\{X_i<a_{2n+1}\}}> (\xi-\e)a_{2n}^2 \right)+\P\left(\exists i\in\{1,\cdots,k\}\text{ s.t. }X_i\geq a_{2n+1}\right).
  }
  Since $a_{2n+1}=2^{a_{2n}}$, the second term can be bounded from above by $\exp{(-c2^{a_{2n}})}$ with some $c>0$, which is negligible. For the first term,
  \al{
    &\quad \P\left(\sum_{i=1}^k X_i\mathbf{1}_{\{X_i<a_{2n+1}\}}> (\xi-\e)a_{2n}^2 \right)\\
    &= \P\left(\sum_{i=1}^k X_i\mathbf{1}_{\{a_{2n}\leq X_i<a_{2n+1}\}}+\sum_{i=1}^k X_i\mathbf{1}_{\{X_i<a_{2n}\}}> (\xi-\e)a_{2n}^2 \right)\\
    &\leq  \P\left(\sum_{i=1}^k X_i\mathbf{1}_{\{a_{2n}\leq X_i<a_{2n+1}\}}+k a_{2n}> (\xi-\e)a_{2n}^2 \right)\\
    &=  \P\left(\sum_{i=1}^k X_i\mathbf{1}_{\{a_{2n}\leq X_i<a_{2n+1}\}}> (\xi-\e)a_{2n}^2-k a_{2n} \right).
  }
 Since for $\beta<\alpha_2$, $\E[\exp{(-\beta X_i\mathbf{1}_{\{a_{2n}\leq X_i<a_{2n+1}\}})}]$ is uniformly bounded for $n$, by the exponential Markov inequality and \eqref{anomaly:distr}, for sufficiently large $n$, this is further bounded from above by
  \al{
    &\quad \exp{\left(-\left(1-\frac{c}{2}\right)\alpha_2((\xi-\e)a_{2n}^2-k a_{2n})\right)} \E\left[\exp{\left(- \left(1-\frac{c}{2}\right)\alpha_2 X_i\mathbf{1}_{\{a_{2n}\leq X_i<a_{2n+1}\}}\right)}\right]^k\\
    &\leq \exp{\left(-\left(1-c\right)\alpha_2(\xi-\e)a_{2n}^2\right)}.
    }
\end{proof}
The same computation with Lemma~\ref{anomaly:lem} as in Theorem~\ref{thm:main1} shows
$$\P(\rmT_{a_{2n}^2}>(\mu+\xi)a_{2n}^2)\leq \exp{(-(2d\alpha_2 \xi +o_\e(1)) n)}+\exp{(-4d\alpha_2 \xi a_{2n}^2)},$$
where $o_{\e}(1)$ is a constant depending on $\e$, which converges to $0$ as $\e\to 0$ after $n\to\infty$. Thus, letting $\e\to 0$ after $n\to\infty$, we obtain
\al{
  \liminf_{n\to\infty}\frac{1}{n}\log{\P(\rmT_{n}>(\mu+\xi)n)}&\leq \liminf_{n\to\infty} \frac{1}{a_{2n}^2}\log{\P(\rmT_{a_{2n}}>(\mu+\xi)a_{2n}^2)}\\
  &\leq -2d\alpha_2\xi. 
}
Together with \eqref{anomaly:cons}, the proof is completed.
\section{Appendix}
\subsection{Proof of Lemma~\ref{lem:Zhang}}
In this section, we show Lemma~\ref{lem:Zhang}: for any $\e>0$, there exist $M=M(\e)\in\N$ and a positive constant $c=c(\e,M)$ such that $n\geq M$,
  $$\P\left(\rmT_{[0,n]\times[-M,M]^{d-1}}\left(0,n\mathbf{e}_1\right)\geq (\mu+\e)n\right)\leq e^{-cn^r}.$$
\begin{proof}
The proof is based on that in \cite[Lemma 3.1]{CZ03}. 
 By \eqref{kingman}, for any $s,\e>0$, there exist $K<M\in\N$ such that
\ben{\label{LLN:box}
\P\left(\rmT_{[-M,M]^d}\left(0,K\mathbf{e}_1\right)\geq \mu\left(1+\frac{\e}{2}\right)K\right)< s.
}
Given $\e>0$, let
$$s=s(\e)=\frac{\e^2}{16\E\tau_e^2},$$
and $K=K(\e,s)$, $M=M(\e,s)$ in \eqref{LLN:box}. For $n\in\N$, we write
$$\S^M_n=[0,n]\times[-M,M]^{d-1}.$$
Let $n>KM$. If we take $\ell=\lf n/(KM)\rf$, then by the triangular inequality,
\al{
  &\quad \P\left(\rmT_{\S_n^M}\left(0,n\mathbf{e}_1\right)\geq (\mu+\e)n\right)\\
    &\leq \P\left(\rmT_{\S^M_{KM\ell}}\left(0,(KM) \ell \mathbf{e}_1\right)+\rmT_{\S_n^M}\left((KM)\ell \mathbf{e}_1,n\mathbf{e}_1\right)\geq \left(\mu+\frac{\e}{2}\right)KM \ell +\frac{\e}{2}n\right)\\
  &\leq \P\left(\rmT_{\S^M_{KM\ell}}\left(0,(KM) \ell \mathbf{e}_1\right)\geq \left(\mu+\frac{\e}{2}\right)KM \ell\right)+\P\left(\rmT_{\S_n^M}\left((KM)\ell \mathbf{e}_1,n\mathbf{e}_1\right)\geq \frac{\e}{2}n\right),
  }
and, by Lemma~\ref{exp:est}, the second term can be bounded from above by $e^{-c n^r}$ with some $c>0$.
Hence, without loss of generalty, We can assume $n$ is divisible by $KM$, say $n=KM\ell$ with $\ell\in\N$. Given $i\in\{0,\cdots,M\ell-1\}$, we define
$$\rmT^{[i]}=\rmT_{(iK\mathbf{e}_1+[-M,M]^d)}(iK\mathbf{e}_1,(i+1)K\mathbf{e}_1).$$
It follows from the definition and the triangular inequality that if $|i-j|> M$, then $\rmT^{[i]}$ and $\rmT^{[j]}$ are independent and
$$\rmT_{\S_n^M}\left(0,n\mathbf{e}_1\right)\leq \sum_{i=0}^{M\ell-1} \rmT^{[i]}+\rmT_{\S_n^M}(0,KM\mathbf{e}_1)+\rmT_{\S_n^M}((n-KM)\mathbf{e}_1,n\mathbf{e}_1).$$
Note that by Lemma~\ref{exp:est},
$$\P(\rmT_{\S_n^M}(0,KM\mathbf{e}_1)\geq \e n)+\P(\rmT_{\S_n^M}((n-KM)\mathbf{e}_1,n\mathbf{e}_1)\geq \e n)\leq e^{-cn^r},$$
with some $c>0$. Thus, by the union bound and $n=KM\ell$,
\al{
  &\quad \P\left(\rmT_{\S_n^M}\left(0,n\mathbf{e}_1\right)\geq (\mu+3\e)n\right)\\
  &\leq \P\left( \sum_{i=0}^{M\ell-1} \rmT^{[i]}\geq (\mu+\e)n\right)+\P(\rmT_{\S_n^M}(0,KM\mathbf{e}_1)\geq \e n)+\P(\rmT_{\S_n^M}((n-KM)\mathbf{e}_1,n\mathbf{e}_1)\geq \e n)\\
  &\leq  \P\left( \sum_{m=0}^{M-1}\sum_{i=0}^{\ell-1} \rmT^{[Mi+m]}\geq (\mu+\e)KM\ell \right)+e^{-cn^r}\\
  &\leq  \sum_{m=0}^{M-1}\P\left( \sum_{i=0}^{\ell-1} \rmT^{[Mi+m]}\geq (\mu+\e)K\ell \right)+e^{-cn^r}.
}
For the first term, we only consider the case $m=0$, since the other cases can be treated in the same way. Then,
\aln{
  &\quad \P\left( \sum_{i=0}^{\ell-1} \rmT^{[Mi]}\geq (\mu+\e)K\ell\right)\nonumber\\
  & = \P\left( \sum_{i=0}^{\ell-1} \rmT^{[Mi]}\mathbf{1}_{\{\rmT^{[Mi]}< (\mu+(\e/2))K\}}+\sum_{i=0}^{\ell-1} \rmT^{[Mi]}\mathbf{1}_{\{\rmT^{[Mi]}\geq (\mu+(\e/2))K\}}\geq (\mu+\e)K\ell\right)\nonumber\\
  &\leq \P\left( \sum_{i=0}^{\ell-1}  (\mu+(\e/2))K+\sum_{i=0}^{\ell-1} \rmT^{[Mi]}\mathbf{1}_{\{\rmT^{[Mi]}\geq (\mu+(\e/2))K\}}\geq (\mu+\e)K\ell\right)\nonumber\\
  &= \P\left( \sum_{i=0}^{\ell-1} \rmT^{[Mi]}\mathbf{1}_{\{\rmT^{[Mi]}\geq (\mu+(\e/2))K\}}\geq \e K\ell/2\right).\label{comp}
}
For $m\in\N$, let us denote
$${\rm e}{[m]}=\la m\mathbf{e}_1,(m+1)\mathbf{e}_1\ra.$$
Then, since
$$\rmT^{[Mi]}\leq \displaystyle\sum_{m=KMi}^{KMi+K-1}\tau_{{\rm e}{[m]}},$$ \eqref{comp} can be bounded from above by
\al{
  &\quad \P\left(\sum_{i=0}^{\ell-1}\sum_{m=KMi}^{KMi+K-1}\tau_{{\rm e}{[m]}}\mathbf{1}_{\{\rmT^{[Mi]}\geq (\mu+(\e/2))K\}} \geq \e K\ell/2\right)\\
  &=\P\left(\sum_{m=0}^{K-1}\sum_{i=0}^{\ell-1}\tau_{{\rm e}{[KMi+m]}}\mathbf{1}_{\{\rmT^{[Mi]}\geq (\mu+(\e/2))K\}} \geq \e K\ell/2\right)\\
  &\leq \sum_{m=0}^{K-1} \P\left(\sum_{i=0}^{\ell-1}\tau_{{\rm e}{[KMi+m]}}\mathbf{1}_{\{\rmT^{[Mi]}\geq (\mu+(\e/2))K\}} \geq \e\ell /2\right).
  }
We write $X^m_i=\tau_{{\rm e}{[KMi+m]}}\mathbf{1}_{\{\rmT^{[Mi]}\geq (\mu+(\e/2))K\}}$. Then $(X^m_i)_i$ are identically and independent non-negative random variables. By \eqref{LLN:box} and the Cauchy--Schwarz inequality,
\al{\E X^m_i&\leq (\E\tau_{e}^2)^{1/2}\P\left(\rmT^{[Mi]}\geq (\mu+(\e/2))K\right)^{1/2}\\
  &< (\E\tau_{e}^2)^{1/2} s^{1/2}= \e/4,
}
and for sufficiently large $t>0$, $\P(X^m_i>t)\leq \P(\tau_e>t)\leq c_2e^{-\alpha t^r},$ where $c_2$ is in \eqref{cond:distr}. Thus by \cite[(4.2)]{GRR}, there exists $c=c(\e,K,M)>0$ such that
$$\P\left(\sum_{i=0}^{\ell-1}X^m_i \geq \e\ell /2\right) \leq e^{-cn^r}.$$
Putting things together, the proof of Lemma~\ref{lem:Zhang} is completed.
\end{proof}
\subsection{Proof of \eqref{thm:main2}}\label{slow-vary}
The proof is essentially the same as in Theorem~\ref{thm:main1}. We only touch with the difference. Since $c_1(t),{\rm b}_1(t)$ are slowly varying, for sufficiently large $n$, the first term of \eqref{lower-est} is bounded from below by
$$c_1((\xi+\e)n)^{2d}\exp{(-2d {\rm b}_1((\xi+\e)n)((\xi+\e)n)^r)}\geq \exp{(-2d (1+\e) {\rm b}_1(n)((\xi+\e)n)^r)}.$$
Thus, \eqref{lower-main} is replaced by
$$  \P(\rmT_n>(\mu+\xi)n)\geq \frac{1}{2}\exp{(-2d (1+\e) {\rm b}_1(n)((\xi+\e)n)^r))}.$$
For the lower bound, the rest is the same as before. For the upper bound, the proof is exactly the same as in Theorem~\ref{thm:main1} except for Lemma~\ref{lem:Zhang} and Lemma~\ref{exp:est}. Lemma~\ref{exp:est} is replaced by the following lemma.

\begin{lem}\label{exp:est2}
  Let $(X_i)_{i=1}^k$ be identical and independent distrbutions satisfying \eqref{cond-distr2}. Then for any $c>0$ there exists $n_0=n_0(k,c)$ such that for any $n\geq n_0$,
  $$\P\left(\sum_{i=1}^k X_i>n\right)\leq  \exp{(-(1-c){\rm b}_2(n) n^r)}.$$
\end{lem}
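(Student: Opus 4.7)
My plan is to follow the template of Lemma~\ref{exp:est}: combine the concavity inequality $(\sum x_i)^r \leq \sum x_i^r$ (for $r \leq 1$, $x_i \geq 0$) with the exponential Markov inequality, but promote the Markov parameter to the $n$-dependent quantity $\beta_n := (1-c/2){\rm b}_2(n)$ to match the slowly varying rate. Since under \eqref{cond-distr2} the exponential moment $\E[\exp(\beta X_1^r)]$ need not be finite uniformly in $n$ (and may fail even for fixed $\beta$ when ${\rm b}_2 \to 0$), I would first truncate: setting $Y_i := X_i \mathbf{1}_{X_i \leq n}$,
\be{
\P\Bigl(\sum_{i=1}^k X_i > n\Bigr) \leq \P\Bigl(\sum_{i=1}^k Y_i > n\Bigr) + k\,\P(X_1 > n).
}
By \eqref{cond-distr2}, the second term is at most $k c_2(n) \exp(-{\rm b}_2(n) n^r) \leq \exp(-(1-c){\rm b}_2(n) n^r)$ for $n$ large, since $c_2$ is slowly varying, hence subpolynomial and negligible against $\exp(\eta\, {\rm b}_2(n) n^r)$ for any $\eta>0$.

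For the bulk term, the concavity inequality and exponential Markov give
\be{
\P\Bigl(\sum Y_i > n\Bigr) \leq \P\Bigl(\sum Y_i^r > n^r\Bigr) \leq \exp(-\beta_n n^r)\,\E\bigl[\exp(\beta_n Y_1^r)\bigr]^k,
}
so the task reduces to showing $\E[\exp(\beta_n Y_1^r)] = \exp(o({\rm b}_2(n) n^r))$. Integration by parts on $\E[\exp(\beta_n X_1^r) \mathbf{1}_{X_1 \leq n}]$ yields
\be{
\E[\exp(\beta_n Y_1^r)] \leq 1 + \int_0^n \beta_n r t^{r-1} \exp(\beta_n t^r)\,\P(X_1 > t)\, dt.
}
Splitting this integral at a large constant $t_0$, the $[0,t_0]$ piece contributes at most $\exp(\beta_n t_0^r) = \exp(O({\rm b}_2(n))) = \exp(o({\rm b}_2(n) n^r))$, and on $[t_0, n]$ the tail bound \eqref{cond-distr2} reduces the integrand to $\beta_n r t^{r-1} c_2(t) \exp(g(t))$, where $g(t) := (\beta_n - {\rm b}_2(t)) t^r$.

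The heart of the argument is to show $\max_{t \in [t_0, n]} g(t) = o({\rm b}_2(n) n^r)$. By Karamata's representation theorem for slowly varying functions, ${\rm b}_2$ is asymptotic to a smooth version satisfying $t\, {\rm b}_2'(t) = o({\rm b}_2(t))$. A critical point $t^* \in [t_0, n]$ of $g$ then satisfies $r(\beta_n - {\rm b}_2(t^*)) = t^* {\rm b}_2'(t^*) = o({\rm b}_2(t^*))$, so ${\rm b}_2(t^*) \sim \beta_n = (1-c/2){\rm b}_2(n)$, and
\be{
g(t^*) = \frac{(t^*)^r \cdot t^* {\rm b}_2'(t^*)}{r} = o\bigl((t^*)^r\, {\rm b}_2(t^*)\bigr) = o\bigl({\rm b}_2(n) n^r\bigr),
}
using $t^* \leq n$ and slow variation of ${\rm b}_2$. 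If no interior critical point lies in $[t_0, n]$, the max is attained at an endpoint: $g(t_0) = O({\rm b}_2(n)) = o({\rm b}_2(n) n^r)$, or $g(n) \leq 0$. Combining these estimates and absorbing the polynomial prefactor $\beta_n r t^{r-1} c_2(t)$ into the $o(\cdot)$ term (since slowly varying functions are subpolynomial), one obtains $\E[\exp(\beta_n Y_1^r)] = \exp(o({\rm b}_2(n) n^r))$, and the exponential Markov bound yields $\P(\sum Y_i > n) \leq \exp(-(1-c/2){\rm b}_2(n) n^r + o({\rm b}_2(n) n^r)) \leq \exp(-(1-c){\rm b}_2(n) n^r)$ for $n \geq n_0(k,c)$. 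The main obstacle is the maximization of $g$; the slow variation of ${\rm b}_2$, in the form of Karamata's representation, is precisely what ensures that the peak of the MGF integrand is a negligible fraction of the target exponent ${\rm b}_2(n) n^r$.
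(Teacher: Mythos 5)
Your argument is correct in outline, but it takes a genuinely different route from the paper. The paper avoids exponential tilting altogether: it discretizes the event by rounding each $X_i$ down to an integer, writes $\{\sum_i X_i>n\}\subset\bigcup_{\ell>n-k}\bigcup_{\sum_i x_i=\ell}\{X_i\ge x_i\ \forall i\}$, drops the factors with $x_i<\tfrac{c\ell}{4k}$, and then uses only the \emph{local uniformity} of slow variation (for $t'\in[\tfrac{c}{4k}t,t]$, $c_2(t')\le 2c_2(t)$ and ${\rm b}_2(t')\ge(1-\tfrac{c}{4}){\rm b}_2(t)$) together with the concavity bound $(\sum x_i)^r\le\sum x_i^r$ and a count of at most $\ell^k$ compositions. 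This is elementary: no Karamata representation, no smoothing, no differentiability of ${\rm b}_2$. Your approach is the more standard Chernoff/tilted-MGF route with an $n$-dependent tilt $\beta_n=(1-c/2){\rm b}_2(n)$, which does require truncation (you are right that $\E\exp(\beta X_1^r)$ may be infinite for any fixed $\beta$ when ${\rm b}_2\to0$) and the Karamata representation to smooth ${\rm b}_2$ and carry out the critical-point analysis of $g(t)=(\beta_n-{\rm b}_2(t))t^r$.

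Two technical points you gloss over but which are needed to make your route watertight. First, after replacing ${\rm b}_2$ by a smooth Karamata version $\tilde{\rm b}_2$ with $t\tilde{\rm b}_2'(t)=o(\tilde{\rm b}_2(t))$, there is a multiplicative error ${\rm b}_2(t)=(1+\delta(t))\tilde{\rm b}_2(t)$; the extra contribution $\delta(t)\tilde{\rm b}_2(t)t^r$ must be shown to be $o({\rm b}_2(n)n^r)$ uniformly on $[t_0,n]$, which requires a bound of the form $\sup_{t\le n}t^r\tilde{\rm b}_2(t)\le(1+o(1))n^r\tilde{\rm b}_2(n)$. Second, your conclusion $g(t^*)=o((t^*)^r{\rm b}_2(t^*))=o({\rm b}_2(n)n^r)$ silently uses the same fact, namely that $t^r{\rm b}_2(t)$ is regularly varying with positive index $r$ so its running supremum up to $n$ is asymptotic to $n^r{\rm b}_2(n)$; note that ${\rm b}_2(t^*)$ itself may exceed ${\rm b}_2(n)$ when $t^*\ll n$, so ``$t^*\le n$'' alone is not enough. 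Both gaps are fixable with standard regular-variation facts, but they are exactly the kind of machinery the paper's combinatorial argument sidesteps. On the other hand, your route may be more familiar and generalizes more transparently to other tail shapes once the MGF maximization is set up.
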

\begin{proof}
  Since $c_2(t),{\rm b}_2(t)$ are  slowly varying, by the local uniformity of slowly varying functions, there exists $t_1=t_1(c,k)>0$ such that for any $t>t_1$ and $t'\in \left[\frac{c}{4k}t,t\right]$,
  \ben{\label{slow:vary}
    c_2\left(t'\right)\leq 2c_2(t),~{\rm b}_2\left(t'\right)\geq \left(1-\frac{c}{4}\right){\rm b}_2(t).
    }
  Moreover, if $\displaystyle\sum_{1\leq i\leq k}  x'_i>n$ for $x'_i\in\R_{\geq 0}$, then there exist $\ell>n-k$ and $x_i\in\Z_{\geq 0}$ such that $x'_i\geq x_i$ and $\sum_i x_i=\ell$. In fact, we can take $x_i=\lf x'_i\rf$ and $\ell=\sum_i x_i$. Thus,
  \aln{
    \P\left(\sum_{i=1}^kX_i>n\right)&\leq \sum_{\ell\geq n-k}\sum_{\sum_i x_i=\ell}\P(X_i\geq x_i~\forall i)\nonumber\\
    &=\sum_{\ell\geq n-k}\sum_{\sum_i x_i=\ell}\prod_{i=1}^k\P(X_i\geq x_i)\nonumber\\
    &\leq  \sum_{\ell\geq n-k}\sum_{\sum_i x_i=\ell}\prod_{i{\rm :}x_i\geq \frac{c\ell}{4k}}\P(X_i\geq x_i),\nonumber
  }
  where the second sum runs over all $(x_i)_{i=1}^k\subset \Z_{\geq 0}$ such that $\sum^k_{i=1} x_i=\ell$ and the product in the last line runs over all $i\in\{1,\cdots,k\}$ such that $x_i\geq \frac{c\ell}{4k}$. If $n-k>t_1$, by \eqref{slow:vary}, this is further bounded from above by
  \aln{
    &\quad  \sum_{\ell\geq n-k}\sum_{\sum_i x_i=\ell} \prod_{i{\rm :}x_i\geq \frac{c\ell}{4k}}c_2(x_i)\exp{(-{\rm b}_2(x_i)x_i^r)}\nonumber\\
    &\leq  \sum_{\ell\geq n-k}\sum_{\sum_i x_i=\ell}2^{k}(c_2(\ell)+ 1)^k\exp{\left(-\left(1-\frac{c}{4}\right){\rm b}_2(\ell)\sum_{i{\rm :}x_i\geq \frac{c\ell}{4k}}x_i^r\right)}.\label{comp1}
  }
 For any $\ell\in\N$ and $\displaystyle(x_i)_{i=1}^k\subset \Z_{\geq 0}$ with $\displaystyle\sum^k_{i=1}x_i=\ell$,
  \ben{\label{comp2}
    \sum_{i{\rm :}x_i\geq \frac{c\ell}{4k}}x_i=\ell-\sum_{i{\rm :}x_i<\frac{c\ell}{4k}}x_i\geq (1-(c/4))\ell.
    }  By \eqref{concave} and \eqref{comp2}, for sufficiently large $n$, \eqref{comp1} can be further bounded from above by
  \al{
    &\quad  \sum_{\ell\geq n-k}\sum_{\sum_i x_i=\ell}2^{k}(c_2(\ell)+ 1)^k\exp{\left(-\left(1-\frac{c}{4}\right){\rm b}_2(\ell)\left(\sum_{i{\rm :}x_i\geq \frac{c\ell}{4k}}x_i\right)^r\right)} \\
    &\leq \sum_{\ell\geq n-k}\sum_{\sum_i x_i=\ell}2^{-k}(c_2(\ell)+ 1)^{k}\exp{\left(-\left(1-\frac{c}{4}\right)\left(1-\frac{c}{4}\right){\rm b}_2(\ell)\ell^r\right)}\\
    &\leq \sum_{\ell\geq n-k}\ell^k 2^{-k}(c_2(\ell)+ 1)^{k}\exp{\left(-\left(1-\frac{c}{2}+\frac{c^2}{16}\right){\rm b}_2(\ell)\ell^r\right)}\\
    &\leq \sum_{\ell\geq n-k} \exp{\left(-\left(1-\frac{c}{2}\right){\rm b}_2(\ell)\ell^r\right)}\leq \exp{\left(-(1-c){\rm b}_2(n)n^r\right)},
  }
  where we have used ${\rm b}_2(\ell)\ell^r\geq (\ell/n)^{r/2} {\rm b}_2(n)n^r$ for $\ell>2n$ and $\left|\frac{{\rm b}_2(t)}{{\rm b}_2(n)}-1\right|\ll 1$ for \linebreak$t\in[n-k,2n]$ in the last line.  
\end{proof}
Lemma~\ref{lem:Zhang} is replaced by the following. 
\begin{lem}
For any $\e>0$, there exist $M=M(\e)\in\N$ and a positive constant $c=c(\e,M)$ such that $n\geq M$,
$$\P\left(\rmT_{[0,n]\times[-M,M]^{d-1}}\left(0,n\mathbf{e}_1\right)\geq (\mu+\e)n\right)\leq e^{-c {\rm b}_2(n) n^r}.$$
\end{lem}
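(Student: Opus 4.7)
The plan is to mirror the proof of Lemma~\ref{lem:Zhang} step by step, with two systematic substitutions: every invocation of Lemma~\ref{exp:est} is replaced by the already-established Lemma~\ref{exp:est2}, and the final concentration bound via \cite[(4.2)]{GRR} is adapted to the slowly varying regime.

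First, I would use \eqref{kingman} to fix, given $\e>0$, a pair $K<M$ such that
\[
\P\left(\rmT_{[-M,M]^d}(0,K\mathbf{e}_1)\ge \mu(1+\e/2)K\right)<s,
\]
with $s=\e^2/(16\E\tau_e^2)$, exactly as in the appendix proof. Next, I would reduce to the case $n=KM\ell$: the difference of length at most $KM$ is absorbed by Lemma~\ref{exp:est2}, which bounds $\P(\rmT_{\S_n^M}((n-KM)\mathbf{e}_1,n\mathbf{e}_1)\ge \e n/2)$ by $\exp(-c\,{\rm b}_2(n)n^r)$, which is of the required order. I would then introduce the variables
\[
\rmT^{[i]}=\rmT_{iK\mathbf{e}_1+[-M,M]^d}(iK\mathbf{e}_1,(i+1)K\mathbf{e}_1),
\]
observe that $\rmT^{[i]}$ and $\rmT^{[j]}$ are independent whenever $|i-j|>M$, split the sum $\sum_{i=0}^{M\ell-1}\rmT^{[i]}$ into $M$ subsums indexed by the residue of $i$ modulo $M$, and treat each subsum separately.

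For a single subsum (say $m=0$), the truncation step of Lemma~\ref{lem:Zhang} reduces matters to bounding
\[
\P\Bigl(\sum_{i=0}^{\ell-1} X_i^m \ge \e\ell/2\Bigr),
\qquad X_i^m:=\tau_{{\rm e}[KMi+m]}\mathbf{1}_{\{\rmT^{[Mi]}\ge (\mu+\e/2)K\}},
\]
where the $X_i^m$ are i.i.d., non-negative, with $\E X_i^m<\e/4$ by Cauchy--Schwarz and the choice of $s$, and with tail $\P(X_i^m>t)\le c_2(t)\exp(-{\rm b}_2(t)t^r)$. The main obstacle is establishing the correct large deviation bound for this sum, namely
\[
\P\Bigl(\sum_{i=0}^{\ell-1} X_i^m\ge \e\ell/2\Bigr)\le \exp\bigl(-c\,{\rm b}_2(n)\,n^r\bigr),
\]
since $n^r$ and $\ell^r$ differ only by a constant and ${\rm b}_2(n)/{\rm b}_2(\ell)\to 1$ by slow variation.

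I would carry this out by the same truncation-plus-union-bound scheme used in Lemma~\ref{exp:est2}: decompose the event according to whether some $X_i^m$ exceeds a threshold $\theta_\ell$ (chosen as a moderate multiple of $\ell$, so that a single excess contributes a term of order $\exp(-{\rm b}_2(\theta_\ell)\theta_\ell^r)\ll \exp(-{\rm b}_2(n)n^r)$), and otherwise handle the truncated variables by an exponential Chebyshev argument (using $\E\exp(\beta(X_i^m)^r)<\infty$ for small $\beta$ thanks to the truncation and the stretched-exponential tail), together with the subadditivity $(\sum x_i)^r\le \sum x_i^r$ from \eqref{concave}. The slow variation of $c_2$ and ${\rm b}_2$ on scales comparable to $\ell$ ensures the constants in the resulting rate compose correctly, and plugging in $n=KM\ell$ converts ${\rm b}_2(\ell)\ell^r$ into $c\,{\rm b}_2(n)n^r$ up to a multiplicative constant depending on $K$, $M$, and $\e$. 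Summing over the $M$ residue classes and absorbing the boundary contributions then yields the stated bound.
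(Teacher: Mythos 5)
Your overall architecture matches the paper: the intended proof is word-for-word the proof of Lemma~\ref{lem:Zhang} with Lemma~\ref{exp:est} replaced by Lemma~\ref{exp:est2} wherever it is invoked (for absorbing the boundary pieces), and the paper explicitly says so, omitting the details. You correctly reproduce the reduction to $n=KM\ell$, the definition of $\rmT^{[i]}$, the split into residue classes, the truncation, and the variables $X_i^m$ with $\E X_i^m<\e/4$ and stretched-exponential (slowly varying) tails.

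The gap is in the final concentration step. The paper does not re-derive the bound $\P(\sum_{i=0}^{\ell-1}X_i^m\ge\e\ell/2)\le e^{-c{\rm b}_2(n)n^r}$; it cites \cite[(4.2)]{GRR} exactly as in Lemma~\ref{lem:Zhang}, and that reference is formulated for tails whose exponent is a regularly varying function of index $r$, so it already covers the factor ${\rm b}_2$. You instead propose to prove this estimate yourself ``by the same truncation-plus-union-bound scheme used in Lemma~\ref{exp:est2},'' and that scheme does not transfer. Lemma~\ref{exp:est2} handles a \emph{fixed} number $k$ of summands with a threshold $n\to\infty$: its union bound is over decompositions $\sum_{i=1}^k x_i=\ell$, of which there are only polynomially many ($\ell^{k-1}$), and its $\exp(\beta(\cdot)^r)$-plus-concavity Chebyshev pays a cost $(\E\exp(\beta X_1^r))^k$ that is a fixed constant. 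Here the number of summands is $\ell\to\infty$ and the threshold $\e\ell/2$ is proportional to it, so the same union bound is over exponentially many integer decompositions, and the Chebyshev step fails outright: applying $(\sum y_i)^r\le\sum y_i^r$ and Markov yields $\exp\bigl(-\beta(\e\ell/2)^r+\ell\log\E\exp(\beta Y_1^r)\bigr)$, whose exponent is $O(\ell^r)-\Theta(\ell)$ and is positive for large $\ell$ once $r<1$, so the bound is vacuous. The correct argument in this growing-$\ell$ regime is a genuine Chernoff bound on $\exp(\lambda\sum Y_i)$ with $\lambda$ tuned to the truncation scale (roughly $\lambda\asymp{\rm b}_2(\ell)\ell^{r-1}$) — this is precisely what \cite{GRR} carries out, which is why the paper invokes it rather than Lemma~\ref{exp:est2}. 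Either cite the slowly varying version of \cite[(4.2)]{GRR} directly, as the paper intends, or replace your sketched Chebyshev step with the correct small-$\lambda$ Chernoff argument; as written, the step does not close.
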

Using Lemma~\ref{exp:est2} instead of Lemma~\ref{exp:est}, the proof is the same as in Lemma~\ref{lem:Zhang}, so we omit this. The rest is the same as before.
\section*{Acknowledgements}
The author would like to thank Ryoki Fukushima introducing \cite[Lemma~3.1]{CZ03}. This research is partially supported by JSPS KAKENHI	19J00660.

\end{document}